\theoremstyle{plain}
\newtheorem{subSCConjecture}[subsection]{Singer's Conjecture for Coxeter groups}
\newtheorem*{Mainthm}{Main Theorem}
\newtheorem{Theorem}[subsection]{Theorem}
\newtheorem{Corollary}[subsection]{Corollary}
\newtheorem{Lemma}[subsection]{Lemma}
\theoremstyle{definition}
\newtheorem{Defn}[subsection]{Definition}
\newcommand{\cs}{\mathcal{S}}
\newcommand{\cH}{\mathcal{H}}
\newcommand{\mfh}{\mathfrak{h}}
\def\l{\operatorname{\ell}}
\newcommand{\ltwo}{\l^2}
\newcommand{\Edge}{\operatorname{Edge}}
\newcommand{\gS}{\Sigma}
\newcommand{\gb}{\beta}
\newcommand{\gs}{\sigma}
\newcommand{\BS}{\mathbb{S}}
\newcommand{\BN}{\mathbb{N}}
\numberwithin{equation}{section}
\begin{document}

\title{$\ell^2$-homology and planar graphs}

\author{Timothy A. Schroeder}

\date{\today}
\maketitle

\begin{abstract}
In his 1930 paper \cite{kuratowski}, Kuratowksi categorized planar graphs, proving that a finite graph $\Gamma$ is planar if and only if it does not contain a subgraph that is homeomorphic to $K_5$, the complete graph on $5$ vertices, or $K_{3,3}$, the complete bipartite graph on six vertices.  This result is also attributed to Pontryagin (\cite{kennedyhist}).  In their 2001 paper \cite{do2}, Davis and Okun point out that the $K_{3,3}$ graph can be understood as the nerve of a right-angled Coxeter system and prove that this graph is not planar using results from $\ell^2$-homology.  In this paper, we employ a similar method using results from \cite{schroedergeom} to prove $K_5$ is not planar.  
\end{abstract}

\section{Introduction}\label{s:intro}
Let $S$ be a finite set of generators.  A \emph{Coxeter matrix} on $S$ is a symmetric $S\times S$ matrix $M=(m_{st})$ with entries in $\BN\cup\{\infty\}$ such that each diagonal entry is $1$ and each off diagonal entry is $\geq 2$.  The matrix $M$ gives a presentation of an associated \emph{Coxeter group} $W$:
\begin{equation}\label{e:coxetergroup}
	W=\left\langle S\mid (st)^{m_{st}}=1, \text{ for each pair } (s,t) \text{ with } m_{st}\neq\infty\right\rangle.
\end{equation}
The pair $(W,S)$ is called a \emph{Coxeter system}.  Denote by $L$ the nerve of $(W,S)$.  $L$ is a simplicial complex with vertex set $S$, the precise definition will be given in section \ref{s:davis}.  In several papers (e.g., \cite{davisannals}, \cite{davisbook}, and \cite{davismoussong}), M. Davis describes a construction which associates to any Coxeter system $(W,S)$, a simplicial complex $\gS(W,S)$, or simply $\gS$ when the Coxeter system is clear, on which $W$ acts properly and cocompactly.  The two salient features of $\gS$ are that (1) it is contractible and (2) that it admits a cellulation under which the nerve of each vertex is $L$.  It follows that if $L$ is a triangulation of $\BS^{n-1}$, $\gS$ is an aspherical $n$-manifold.  Hence, there is a variation of Singer's Conjecture, originally regarding the (reduced) $\ell^2$-homology of aspherical manifolds, for such Coxeter groups.

\begin{subSCConjecture}\label{conj:singerc} Let $(W,S)$ be a Coxeter group such that its nerve, $L$, is a triangulation of $\BS^{n-1}$.  Then $\cH_{i}(\gS_L)=0$ for all $i\neq\frac{n}{2}$.
\end{subSCConjecture}

For details on $\ltwo$-homology theory, see \cite{davismoussong}, \cite{do2} and \cite{eckmann}.  Conjecture \ref{conj:singerc} holds for elementary reasons in dimensions $1$ and $2$.  In \cite{do2}, Davis and Okun prove that if Conjecture \ref{conj:singerc} holds for \emph{right-angled} Coxeter systems in dimension $n$, then it also holds in dimension $n+1$.  (Here, \emph{right-angled} means generators either commute, or have no relation).  They also prove directly that Conjecture \ref{conj:singerc} holds for right-angled systems in dimension 3, and thus also in dimension 4.  This result also follows from work by Lott and L\"uck (\cite{LL}) and Thurston (\cite{thurston}) regarding Haken Manifolds.  In \cite{schroedergeom}, the author proves that Conjecture \ref{conj:singerc} holds for arbitrary Coxeter systems with nerve $\BS^2$.  

Also in \cite{do2}, Davis and Okun use their low dimensional results to prove the following generalization of Conjecture \ref{conj:singerc}.
\begin{Lemma}\label{l:k33sub}\textup{(Lemma 9.2.3, \cite{do2})} Suppose $(W,S)$ is a right-angled Coxeter system with nerve $L$, a flag triangulation of $\BS^2$.  Let $A$ be a full subcomplex of $L$.  Then 
\[\cH_i(W\gS_A)=0 \text{ for } i>1.\]
\end{Lemma}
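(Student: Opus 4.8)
The plan is to deduce the statement from the dimension-$3$ case of Conjecture \ref{conj:singerc}, which is available for right-angled systems, by an induction that peels off one generator at a time. Since $L$ is a flag triangulation of $\BS^2$, $\gS_L$ is a contractible $3$-manifold, and the right-angled, dimension-$3$ case of Conjecture \ref{conj:singerc} gives $\cH_i(\gS_L)=0$ for \emph{every} $i$, because $\frac n2=\frac 32$ is not an integer. I would run the induction on $N=\Card(S\setminus A^{(0)})$, the number of vertices of $L$ not lying in $A$. When $N=0$, fullness forces $A=L$, so $W\gS_A=\gS_L$ and $\cH_i(W\gS_A)=0$ for all $i$; this is the base case.

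For the inductive step, suppose $N\geq 1$ and fix a vertex $v\in S\setminus A^{(0)}$. Let $A_+$ be the full subcomplex of $L$ on $A^{(0)}\cup\{v\}$ and let $B=A\cap\Lk(v,L)$. Because $L$ is flag, $\Lk(v,L)$ is the full subcomplex on $v^\perp$ and is itself a flag triangulation of $\BS^1$, and $B$ is a full subcomplex of it. A simplex of $A_+$ either misses $v$, and so lies in $A$, or contains $v$, and so has the form $\{v\}*\tau$ with $\tau\in B$; hence $A_+=A\cup(v*B)$ with $A\cap(v*B)=B$. I would then lift this combinatorial pushout to the Davis complex, obtaining $W\gS_{A_+}=W\gS_A\cup W\gS_{v*B}$ as a union of $W$-invariant subcomplexes of $\gS_L$ with $W\gS_A\cap W\gS_{v*B}=W\gS_B$.

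With the decomposition in hand, I would invoke the weakly exact Mayer--Vietoris sequence in reduced $\ltwo$-homology associated to this cover,
\[\cdots\to\cH_i(W\gS_B)\to\cH_i(W\gS_A)\oplus\cH_i(W\gS_{v*B})\to\cH_i(W\gS_{A_+})\to\cdots.\]
Fix $i>1$. The inductive hypothesis, applied to $A_+$ (which has one fewer missing vertex), gives $\cH_i(W\gS_{A_+})=0$. For the overlap term, $B$ is a full subcomplex of the flag triangulation $\Lk(v,L)$ of $\BS^1$, so $\gS_B$ is at most $2$-dimensional; thus $\cH_i(W\gS_B)=0$ automatically for $i>2$, while for $i=2$ it is the content of the dimension-$2$ analogue of the lemma, namely that $\cH_i(W\gS_B)=0$ for $i>1$ whenever $B$ is a full subcomplex of a flag $\BS^1$. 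That analogue is proved by exactly the same peeling induction one dimension down: its base case $B=\BS^1$ is $\cH_2(\gS_{\BS^1})=0$, supplied by the elementary dimension-$2$ case of Conjecture \ref{conj:singerc}, and its link terms now live over $\BS^0$ and so are at most $1$-dimensional, hence vanish in degree $>1$ for free. Granting $\cH_i(W\gS_B)=0$, weak exactness squeezes the middle term to zero, and since $\cH_i(W\gS_A)$ is a direct summand we conclude $\cH_i(W\gS_A)=0$, completing the induction.

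The main obstacle I anticipate is the passage from the combinatorial pushout $A_+=A\cup_B(v*B)$ to the equivariant decomposition $W\gS_{A_+}=W\gS_A\cup_{W\gS_B}W\gS_{v*B}$: one must verify that forming $W$-orbits of these full subcomplexes commutes with the union and the intersection, which rests on fullness together with the fact that $v$ commutes with every vertex of $B$ (so the star piece splits as a product $\gS_{\langle v\rangle}\times\gS_B$), and that the resulting cover carries a genuinely \emph{weakly} exact Mayer--Vietoris sequence in reduced $\ltwo$-homology. Once that bookkeeping is in place, the boundary degrees and the dimension counts are routine, and it is precisely weak exactness---not exactness, which reduced $\ltwo$-homology does not satisfy---that is needed to propagate the vanishing through the induction.
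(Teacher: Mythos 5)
Your proposal is correct and takes essentially the same route as the paper: the lemma itself is cited from Davis--Okun, but the paper's proof of the directly analogous Theorem \ref{t:main} is exactly your vertex-peeling induction --- decompose by removing a vertex $v$, identify the intersection with the link $B_v$ (a full subcomplex of a flag $\BS^1$, killed by the dimension-$2$ case), handle the cone piece via the product/cone formula, and run the weakly exact Mayer--Vietoris sequence with base case the vanishing of $\cH_\ast(\gS_L)$ for the spherical nerve. Your explicit care about weak (rather than honest) exactness and about lifting the combinatorial pushout to the equivariant decomposition $W\gS_{A_+}=W\gS_A\cup_{W\gS_B}W\gS_{v\ast B}$ addresses points the paper passes over silently.
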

Here, $\gS_A$ is the Davis complex associated with the Coxeter system $(W_A,A^0)$, where $W_A$ is the subgroup of $W$ generated by vertices in $A$, with nerve $A$.  It is a subcomplex of $\gS$.  

Lemma \ref{l:k33sub} is the key to what Davis and Okun call ``a complicated proof of the classical fact that $K_{3,3}$ is not planar,''  (See section 11.4.1, \cite{do2}).  We outline that argument in Section \ref{s:previous}.  The purpose of this paper is employ a similar argument to prove that $K_5$ is not planar.  

The key step for us is proving a result analogous to Lemma \ref{l:k33sub}, but for subcomplexes of arbitrary Coxeter systems.  
\begin{Mainthm}\label{t:mainintro} \textup{(See Theorem \ref{t:main})} Let $(W,S)$ be a Coxeter system with nerve $L$, a triangulation of $\BS^2$.  Let $A$ be full subcomplex of $L$ with right-angled complement.  Then 
\[\cH_i(W\gS_A)=0 \text{ for } i>1.\]
\end{Mainthm}
Here $A$ having a ``right-angled complement'' means that for generators $s$ and $t$, the Coxeter relation $m_{st}\neq 2$ nor $\infty$ implies that the vertices corresponding to $s$ and $t$ are both in $A$.  From the Main Theorem, it follows that $K_5$ is not planar.
  
\section{The Davis complex}\label{s:davis}
Let $(W,S)$ be a Coxeter system.  Given a subset $U$ of $S$, define $W_{U}$ to be the subgroup of $W$ generated by the elements of $U$.  A subset $T$ of $S$ is \textit{spherical} if $W_T$ is a finite subgroup of $W$.  In this case, we will also say that the subgroup $W_{T}$ is spherical.  Denote by $\cs$ the poset of spherical subsets of $S$, partially ordered by inclusion.  Given a subset $V$ of $S$, let $\cs_{\geq V}:=\{T\in \cs|V\subseteq T\}$.  Similar definitions exist for $<, >, \leq$.  For any $w\in W$ and $T\in \cs$, we call the coset $wW_{T}$ a \emph{spherical coset}.  The poset of all spherical cosets we will denote by $W\cs$.  

Let $K=|\cs|$, the geometric realization of the poset $\cs$.  It is a finite simplicial complex.  Denote by $\gS(W,S)$, or simply $\gS$ when the system is clear, the geometric realization of the poset $W\cs$.  This is the Davis complex.  The natural action of $W$ on $W\cs$ induces a simplicial action of $W$ on $\gS$ which is proper and cocompact.  $K$ includes naturally into $\gS$ via the map induced by $T\rightarrow W_{T}$.  So we view $K$ as a subcomplex of $\gS$, and note that $K$ is a strict fundamental domain for the action of $W$ on $\gS$.  

The poset $\cs_{>\emptyset}$ is an abstract simplicial complex.  This simply means that if $T\in\cs_{>\emptyset}$ and $T'$ is a nonempty subset of $T$, then $T'\in \cs_{>\emptyset}$.  Denote this simplicial complex by $L$ and call it the \emph{nerve} of $(W,S)$.  The vertex set of $L$ is $S$ and a non-empty subset of vertices $T$ spans a simplex of $L$ if and only if $T$ is spherical.  

Define a labeling on the edges of $L$ by the map $m:\Edge(L)\rightarrow \{2,3,\ldots\}$, where $\{s,t\}\mapsto m_{st}$.  This labeling accomplishes two things: (1) the Coxeter system $(W,S)$ can be recovered (up to isomorphism) from $L$ and (2) the $1$-skeleton of $L$ inherits a natural piecewise spherical structure in which the edge $\{s,t\}$ has length $\pi-\pi/m_{st}$.  $L$ is then a \emph{metric flag} simplicial complex (see Definition \cite[I.7.1]{davisbook}).  This means that any finite set of vertices, which are pairwise connected by edges, spans a simplex of $L$ if an only if it is possible to find some spherical simplex with the given edge lengths.  In other words, $L$ is ``metrically determined by its $1$-skeleton.''  

Recall that a simplicial complex $L$ is \emph{flag} if every nonempty, finite set of vertices that are pairwise connected by edges spans a simplex of $L$.  Thus, it is clear that any flag simplicial complex can correspond to the nerve of a right-angled Coxeter system.  For the purpose of this paper, we will say that labeled (with integers $\geq 2$) simplicial complexes are \emph{metric flag} if they correspond to the labeled nerve of some Coxeter system.  We then treat vertices of metric flag simplicial complexes as generators of a corresponding Coxeter system.  Moreover, for a metric flag simplicial complex $L$, we write $\gS_L$ to denote the associated Davis complex.   

\paragraph{A cellulation of $\gS$ by Coxeter cells.}  $\gS$ has a coarser cell structure: its cellulation by ``Coxeter cells.''  (References include \cite{davisbook} and \cite{do2}.)  The features of the Coxeter cellulation are summarized by \cite[Proposition 7.3.4]{davisbook}.  We point out that under this cellulation the link of each vertex is $L$.  It follows that if $L$ is a triangulation of $\BS^{n-1}$, then $\gS$ is a topological $n$-manifold.  

\paragraph{Full subcomplexes.}  Suppose $A$ is a full subcomplex of $L$.  Then $A$ is the nerve for the subgroup generated by the vertex set of $A$.  We will denote this subgroup by $W_{A}$.  (This notation is natural since the vertex set of $A$ corresponds to a subset of the generating set $S$.)  Let $\cs_{A}$ denote the poset of the spherical subsets of $W_A$ and let $\gS_{A}$ denote the Davis complex associated to $(W_{A},A^{0})$.  The inclusion $W_{A}\hookrightarrow W_{L}$ induces an inclusion of posets $W_{A}\cs_{A}\hookrightarrow W_{L}\cs_{L}$ and thus an inclusion of $\gS_{A}$ as a subcomplex of $\gS_{L}$.  Note that $W_{A}$ acts on $\gS_{A}$ and that if $w\in W_{L}-W_{A}$, then $\gS_{A}$ and $w\gS_{A}$ are disjoint copies of $\gS_{A}$.  Denote by $W_{L}\gS_{A}$ the union of all translates of $\gS_{A}$ in $\gS_{L}$.  

\section{Previous results in $\ell^2$-homology}\label{s:previous}
Let $L$ be a metric flag simplicial complex, and let $A$ be a full subcomplex of $L$.  The following notation will be used throughout.
\begin{align}
\mfh_i(L) &:= \cH_i(\gS_L)\label{e:not1}\\
\mfh_i(A) &:= \cH_i(W_L\gS_A)\label{e:not2}\\
\gb_{i}(A)&:= \dim_{W_L}(\mfh_i(A)).\label{e:not4}
\end{align}
Here $\dim_{W_L}(\mfh_i(A))$ is the von Neumann dimension of the Hilbert $W_L$-module $W_L\gS_A$ and $\gb_{i}(A)$ is the $i^{\text{th}}$ $\ltwo$-Betti number of $W_L\gS_A$.  The notation in \ref{e:not2} and \ref{e:not4} will not lead to confusion since $\dim_{W_L}(W_L\gS_A)=\dim_{W_A}(\gS_A)$.  (See \cite{do2} and \cite{eckmann}).  

\paragraph{$0$-dimensional homology.}\label{p:0-dim}  Let $\gS_A$ be the Davis complex constructed from a Coxeter system with nerve $A$, so $W_A$ acts geometrically on $\gS_A$.  The reduced $\ell^2$-homology groups of $\gS_A$ can be identified with the subspace of \emph{harmonic $i$-cycles} (see \cite{eckmann} or \cite{do2}).  That is, $x\in\mfh_i(A)$ is an $i$-cycle and $i$-cocycle.  $0$-dimensional cocycles of $\gS_A$ must be constant on all vertices of $\gS_A$.  It follows that if $W_A$ is infinite, and therefore the $0$-skeleton of $\gS_A$ is infinite, $\gb_0(A)=0$.  

\paragraph{Singer Conjecture in dimensions 1 and 2.}\label{p:low-dimsinger} As mentioned in Section \ref{s:intro}, Conjecture \ref{conj:singerc} is true in dimensions dimensions 1 and 2.  Indeed, let $L$ be $\BS^0$ or $\BS^1$, the nerve of a Coxeter system $(W,S)$.  Then $W$ is infinite and so, as stated above, $\gb_0(L)=0$.  Poincar\'e duality then implies that the top-dimensional $\ltwo$-Betti numbers are also 0.  

\paragraph{Orbihedral Euler Characteristic.}  $\gS_L$ is a geometric $W$-complex.  So there are only finite number of $W$-orbits of cells in $\gS_L$, and the order of each cell stabilizer is finite. The \emph{orbihedral Euler characteristic} of $\gS_L/W=K$, denoted $\chi^{\text{orb}}(\gS_L/W)$, is the rational
number defined by 
\begin{equation}\label{eqn:chiorb}
\chi^{\text{orb}}(\gS_L/W)=\chi^{\text{orb}}(K)=\sum_{\gs}\frac{(-1)^{\dim\gs}}{|W_\gs|},
\end{equation}
where the summation is over the simplices of $K$ and $|W_\gs|$ denotes the order of the stabilizer in $W$ of $\gs$.
Then, if the dimension of $L$ is $n-1$, a standard argument (see \cite{eckmann}) proves Atiyah's formula:
\begin{equation}\label{e:atiyah}
\chi^{\text{orb}}(K)=\sum_{i=0}^{n}(-1)^i\gb_i(L).
\end{equation}

\paragraph{Joins.}  If $L=L_1\ast L_2$, the join of $L_1$ and $L_2$, where each edge connecting a vertex of $L_1$ with a vertex of $L_2$ is labeled $2$, we write $L=L\ast_2 L_2$ and then $W_L=W_{L_1}\times W_{L_2}$ and $\gS_L=\gS_{L_1}\times\gS_{L_2}$.  We may then use K\"unneth formula to calculate the (reduced) $\ltwo$-homology of $\gS_{L}$, and the following equation from \cite[Lemma 7.2.4]{do2} extends to our situation:
\begin{equation}\label{e:rt-angledjoin}
	\gb_{k}(L_{1}\ast L_{2})=\sum_{i+j=k}\gb_{i}(L_{1})\gb_{j}(L_{2}).
\end{equation}
If $L=P\ast_2 L_2$, where $P$ is one point, then we call $L$ a \emph{right-angled cone}.  $\gS_P=\left[-1,1\right]$, so there are no 1-cycles, and so $\gb_1=(P)=0$.  But, $\chi^{\text{orb}}(\gS_P/W_P)=1/2$ so by equation \ref{e:atiyah}, $\gb_0(P)=1/2$.  Thus, in reference to the right-angled cone $L$, equation \ref{e:rt-angledjoin} implies that 
\begin{equation}\label{e:rt-angledcone}
\gb_i(L)=\frac{1}{2}\gb_i(L_2)
\end{equation}

\paragraph{The $K_{3,3}$ case.}\label{par:k33}
Along with Lemma \ref{l:k33sub}, the above gives us enough to prove that $K_{3,3}$ is not planar.  Indeed, let $P_3$ denote $3$ disjoint points.  Then $K_{3,3}=P_3\ast_2 P_3$ is the nerve of a right-angled Coxeter system.  Since $W_{K_{3,3}}$ is infinite, so $\gb_0(K_{3,3})=0$, and equations \ref{eqn:chiorb} and \ref{e:atiyah} give us that $\gb_1(P_3)=1/2$.  It then follows from equation \ref{e:rt-angledjoin} that $\gb_2(K_{3,3})=1/4$.  Thus, if $K_{3,3}$ were a planar graph, it could be embedded as a full-subcomplex of a flag triangulation of $\BS^2$, where each edge is labeled 2.  This triangulation of $\BS^2$ corresponds to the nerve of a right-angled Coxeter system.  But this contradicts Lemma \ref{l:k33sub}.  For details on this proof see \cite[Sections 8, 9 and 11]{do2}.

\section{The $K_5$ Graph}
Let $K_5$ denote the complete graph on 5 vertices.  The right-angled methods above cannot be applied to $K_5$ because, if the edges are labeled with 2's, then $K_5$ cannot be embedded as a full subcomplex of a metric flag triangulation of $\BS^2$.  However, $K_5$ is metric flag if the edges are labeled with 3's.  For if $r,s$ and $t$ are generators of a Coxeter system such that $m_{rs}=m_{st}=m_{rt}=3$, then $\{r,s,t\}$ is not a spherical subset and this set does not span a $2$-simplex in  the nerve of the correpsonding Coxeter system.  This simple observation leads to the following definition.

\begin{Defn} We say a full subcomplex $A$ of a metric flag simplicial complex $L$ has a \emph{right-angled complement} if the label on all edges not in $A$ is $2$.  
\end{Defn}
The following two Lemmas will be used in the set-up and proof of our Main theorem.

\begin{Lemma}\label{l:full}  Let $L$ be a metric flag simplicial complex, $A\subseteq L$ a full subcomplex with a right-angled complement.  Let $B$ be a full subcomplex of $L$ such that $A\subseteq B$ and let $v\in B-A$ be a vertex.  Then $B_v$, the link of $v$ in $B$, is a full subcomplex of $L$.
\end{Lemma}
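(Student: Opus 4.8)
The plan is to verify the fullness condition directly: I must show that whenever a set $T$ of vertices of $B_v$ spans a simplex of $L$, that simplex already lies in $B_v$. Since the link of a vertex is automatically a subcomplex of $B$, and $B$ is a subcomplex of $L$, the complex $B_v$ is a subcomplex of $L$; the only content is the spanning condition. So I would let $T=\{w_1,\dots,w_k\}$ be vertices of $B_v$ that span a simplex of $L$. Each $w_i$ is a vertex of $B$ joined to $v$ by an edge of $B$, so $T\subseteq B^0$; because $B$ is full in $L$, the set $T$ in fact spans a simplex of $B$. By the definition of the link, it then suffices to show that $\{v\}\cup T$ spans a simplex of $B$, and since $B$ is full this reduces to showing that $\{v\}\cup T$ spans a simplex of $L$.

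The key observation, and the place where the hypotheses on $A$ enter, is the following. Since $v\in B-A$ we have $v\notin A$; as $A$ is a full subcomplex, no edge of $L$ incident to $v$ can lie in $A$ (an endpoint of such an edge would have to be the vertex $v\in A$), so the right-angled complement hypothesis forces every such edge to carry the label $2$. In particular $m_{vw_i}=2$ for each $i$, i.e. $v$ commutes with every neighbour it has in $B$.

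With this in hand I would establish sphericity of $\{v\}\cup T$. Recall that, for the nerve of a Coxeter system, a set of pairwise-adjacent vertices spans a simplex of $L$ precisely when it is spherical, i.e. when the corresponding special subgroup is finite. Because $v$ commutes with each $w_i$, the generator $v$ is isolated from $T$ in the Coxeter diagram, whence $W_{\{v\}\cup T}\cong \BZ/2\times W_T$. As $T$ spans a simplex of $L$, $W_T$ is finite, and therefore so is $W_{\{v\}\cup T}$; thus $\{v\}\cup T$ is spherical and spans a simplex of $L$. Equivalently, working metrically, the vertices of $T$ span a spherical simplex, and since $v$ sits at distance $\pi-\pi/2=\pi/2$ from each $w_i$ one may cone that spherical simplex off by $v$ orthogonally to produce a spherical simplex one dimension higher, so the metric flag criterion applies. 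Applying fullness of $B$ as above then shows that $\{v\}\cup T$, and hence $T$, spans a simplex of $B_v$, completing the argument.

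The computations here are routine; the one step that genuinely requires the hypotheses is the claim that $\{v\}\cup T$ is spherical. Everything hinges on converting ``$v\notin A$'' into ``$v$ commutes with all of its neighbours,'' which is exactly the content of the right-angled complement, and then on the elementary fact that adjoining a commuting involution to a finite special subgroup keeps it finite. I expect the only point demanding care is checking that all the relevant pairs are genuinely edges of $L$ before invoking the spherical (equivalently, metric flag) criterion: the pairs $\{w_i,w_j\}$ are edges because $T$ spans a simplex of $L$, and the pairs $\{v,w_i\}$ are edges because the $w_i$ are, by definition, vertices of $B_v$.
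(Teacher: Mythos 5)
Your argument is correct and follows essentially the same route as the paper's proof: both reduce the fullness of $B_v$ to showing that $T\cup\{v\}$ is spherical, using the fact that $v\notin A$ forces every edge at $v$ to be labeled $2$ (the right-angled complement hypothesis), so that $W_{T\cup\{v\}}\cong \BZ/2\times W_T$ is finite. Your write-up is in fact somewhat more careful than the paper's, which leaves the role of the right-angled complement and the final application of fullness of $B$ implicit.
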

\begin{proof} Let $T$ be a subset of vertices contained in $B_v$ and the vertex set of a simplex $\gs$ of $L$.  Then $T$ defines a spherical subset of the corresponding Coxeter system.  Since the  of $T$ are in $B_v$, $v$ commutes with each vertex of $T$. Thus $T\cup\{v\}$ is a spherical subset and therefore $\gs$ is in $B_v$.  
\end{proof}

\begin{Lemma}\label{l:2-dim} Let $L$ be a metric flag triangulation of $\BS^1$, let $A$ be a full subcomplex of $L$.  Then $\gb_i(A)=0$ for $i>1$.
\end{Lemma}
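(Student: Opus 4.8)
The plan is to induct on the number of simplices of $A$, via a Mayer--Vietoris argument in reduced $\ltwo$-homology. First, since $A$ is a full subcomplex of the $1$-dimensional complex $L=\BS^1$, it has dimension at most $1$, so the maximal spherical subsets of $W_A$ have cardinality at most $2$ and $\gS_A$, hence each translate in $W_L\gS_A$, has dimension at most $2$. Thus $\gb_i(A)=0$ automatically for $i>2$, and the entire content of the lemma is the single statement $\gb_2(A)=0$.

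The main tool will be the following. If $A=A_1\cup A_2$ with $A_1,A_2$ full subcomplexes and $A_0=A_1\cap A_2$, then the classical identity $W_{A_1}\cap W_{A_2}=W_{A_0}$ for special subgroups of a Coxeter group yields a decomposition $W_L\gS_A=W_L\gS_{A_1}\cup W_L\gS_{A_2}$ with $W_L\gS_{A_1}\cap W_L\gS_{A_2}=W_L\gS_{A_0}$. This gives a weakly exact Mayer--Vietoris sequence
\[
\mfh_2(A_0)\to \mfh_2(A_1)\oplus\mfh_2(A_2)\to \mfh_2(A)\to \mfh_1(A_0),
\]
and since von Neumann dimension is additive along weakly exact sequences, one obtains $\gb_2(A)\le \gb_2(A_1)+\gb_2(A_2)+\gb_1(A_0)$.

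For the base cases I record three facts. (i) If $A$ is a single point $P$, then $\gS_P=[-1,1]$, so $\gb_i(P)=0$ for $i\ge 1$. (ii) If $A$ is a single edge, then $W_A$ is a finite dihedral group and $\gS_A$ is a polygon, i.e.\ a contractible disc, so its reduced $\ltwo$-homology vanishes above degree $0$ and $\gb_i(A)=0$ for $i\ge 1$. (iii) If $A=L=\BS^1$, then $\gS_L$ is an aspherical $2$-manifold and Conjecture \ref{conj:singerc} in dimension $2$ gives $\gb_2(L)=0$. Any full subcomplex of $\BS^1$ not of type (iii) is a disjoint union of arcs and isolated points. For the inductive step, assume $A$ is a proper full subcomplex with more than one simplex. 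If $A$ is disconnected, write $A=A_1\sqcup A_2$ with $A_1$ one connected component and $A_2$ the union of the remaining components; both are full and $A_0=A_1\cap A_2=\emptyset$, so $\gS_{A_0}$ is a point and $\gb_1(A_0)=0$. If $A$ is a single arc with at least two edges, write $A=A_1\cup A_2$ where $A_2$ is a terminal edge and $A_1$ is the complementary sub-arc; these are full proper sub-arcs meeting in one vertex $A_0$, so again $\gb_1(A_0)=0$. In either case the two pieces have strictly fewer simplices, so $\gb_2(A_1)=\gb_2(A_2)=0$ by induction (or by the base cases), and the Mayer--Vietoris estimate forces $\gb_2(A)=0$.

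The step I expect to be the main obstacle is establishing the geometric decomposition $W_L\gS_{A_1}\cap W_L\gS_{A_2}=W_L\gS_{A_0}$ and the consequent weak exactness of the sequence: this rests on the parabolic intersection identity $W_{A_1}\cap W_{A_2}=W_{A_0}$ together with the verification that each piece of the splitting is again a \emph{full} subcomplex. The latter is exactly where the combinatorics of $\BS^1$ enters—each vertex has precisely two neighbours, and a proper sub-arc carries no ``wrap-around'' edge—so that removing a terminal edge or separating a component never destroys fullness. Once the decomposition lemma and weak exactness are in hand, additivity of von Neumann dimension makes the induction routine.
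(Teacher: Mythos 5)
Your argument is correct, but it takes a genuinely different route from the paper. The paper's proof is a two-line argument with the long exact sequence of the pair $(\gS_L, W\gS_A)$: since $\gS_L$ is $2$-dimensional the relative group in degree $3$ vanishes, so $\mfh_2(A)$ injects into $\mfh_2(L)$, which is zero by the dimension-$2$ case of Conjecture \ref{conj:singerc}; degrees $i>2$ are trivial for dimension reasons. You instead avoid the pair sequence entirely and run an induction on the simplices of $A$, using the Mayer--Vietoris sequence for a union of full subcomplexes (which rests on the parabolic intersection identity $W_{A_1}\cap W_{A_2}=W_{A_1\cap A_2}$ and the decomposition $W\gS_{A_1}\cap W\gS_{A_2}=W\gS_{A_0}$, as in Davis--Okun) together with explicit base cases (point, spherical edge, and $A=L$). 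Your base cases and the fullness checks for the sub-arcs are all sound, and the combinatorial observation that a proper full subcomplex of a triangulated circle is a disjoint union of arcs and points is exactly what makes the splitting work. What the paper's approach buys is brevity and uniformity -- one application of the pair sequence handles every full subcomplex at once, with Singer in dimension $2$ doing all the work; what your approach buys is that for \emph{proper} full subcomplexes $A\subsetneq L$ you never invoke Singer in dimension $2$ at all (only the trivial base cases), at the cost of importing the Mayer--Vietoris machinery for unions of full subcomplexes, which is comparable in weight to the pair sequence the paper uses. Both are legitimate; note also that your reduction of the statement to the single claim $\gb_2(A)=0$ matches the paper's implicit use of the same dimension count.
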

\begin{proof} Consider the long exact sequence of the pair $(\gS_L,W\gS_A)$:
\[0\rightarrow\mfh_2(A)\rightarrow\mfh_2(L)\rightarrow\mfh(L,A)\rightarrow...\]
Since Conjecture \ref{conj:singerc} is true in dimension 2, $\mfh_2(L)=0$ and exactness implies the result.
\end{proof}

For convenience, we restate the relevant result from \cite{schroedergeom} needed to prove $K_5$ is non-planar.
\begin{Theorem}\label{t:basecase}\textup{(See Corollary 4.4, \cite{schroedergeom})} Let $L$ be a metric flag triangulation of $\BS^2$.  Then 
\[\mfh_i(L)=0 \text{ for all } i\]
\end{Theorem}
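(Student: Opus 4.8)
This statement is Conjecture \ref{conj:singerc} in the case $n=3$: since $\tfrac{n}{2}=\tfrac{3}{2}$ is never an integer, the assertion ``$\cH_i(\gS_L)=0$ for all $i\neq\tfrac{3}{2}$'' is exactly the claim that $\mfh_i(L)=0$ in every degree. The plan is to clear the extreme degrees by duality and then reduce the middle degrees to the right-angled case. Since $L$ triangulates $\BS^2$, the Davis complex $\gS_L$ is a contractible topological $3$-manifold carrying a proper, cocompact, cellular $W$-action, so reduced $\ell^2$-homology satisfies Poincar\'e duality in the form $\gb_i(L)=\gb_{3-i}(L)$. The nerve of a finite Coxeter group is a simplex, so $L\cong\BS^2$ forces $W$ to be infinite, and the reasoning of the paragraph on $0$-dimensional homology gives $\gb_0(L)=0$; duality then yields $\gb_3(L)=0$. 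It remains to prove $\gb_1(L)=\gb_2(L)=0$, and since duality already gives $\gb_1(L)=\gb_2(L)$ it is enough to kill one middle Betti number. Observe that Atiyah's formula \eqref{e:atiyah} offers no shortcut: with $\gb_0=\gb_3=0$ and $\gb_1=\gb_2$ the alternating sum $\chi^{\text{orb}}(K)$ vanishes identically, so the middle vanishing must be obtained homologically, not by counting.

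For the base of the argument I would take $L$ right-angled, every edge labeled $2$. Then $L$ is a flag triangulation of $\BS^2$ and $\mfh_i(L)=0$ is precisely the dimension-$3$ right-angled case of Singer's conjecture, proved directly by Davis and Okun \cite{do2} (and also deducible from the results of Lott--L\"uck and Thurston on Haken manifolds cited in Section \ref{s:intro}). For a general metric flag triangulation I would induct on the number of edges of $L$ carrying a label $\geq 3$. Given such an edge $\{s,t\}$, the idea is to compare $\gS_L$ with the Davis complex of an auxiliary Coxeter system in which this label has been lowered toward $2$, organizing the comparison as a long exact or Mayer--Vietoris sequence in reduced $\ell^2$-homology. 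The local models entering such a sequence are Davis complexes built over the links of the simplices incident to $\{s,t\}$, and in a triangulation of $\BS^2$ these links are copies of $\BS^0$ and $\BS^1$. Hence the correction terms are controlled by Singer's conjecture in dimensions $1$ and $2$ together with Lemma \ref{l:2-dim}, all of which vanish above degree $1$; feeding this into the exact sequence would match $\gb_2(L)$ with the corresponding number for a nerve having one fewer high label, and the induction would terminate at the right-angled base case.

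I expect the inductive comparison to be the crux. Lowering $m_{st}$ is not a harmless reweighting: it changes which subsets of $S$ are spherical, so it can create or destroy simplices of the nerve and even spoil the property of being a triangulation of $\BS^2$. The delicate work is therefore to select the auxiliary system so that the simplicial type of the nerve stays under control, to use the metric flag condition to confine the discrepancy between the two Davis complexes to subcomplexes assembled from the $\BS^0$- and $\BS^1$-links above, and to verify that the connecting homomorphisms in the resulting sequence behave as the argument needs. Once that bookkeeping is arranged, the low-dimensional Singer results supply exactly the vanishing required and the induction closes.
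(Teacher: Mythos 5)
Your reduction of the extreme degrees is fine: $W$ is infinite since $L\cong\BS^2$ is not a simplex, so $\gb_0(L)=0$, and Poincar\'e duality for the $3$-manifold $\gS_L$ kills $\gb_3(L)$ and gives $\gb_1(L)=\gb_2(L)$; your observation that Atiyah's formula is then vacuous is also correct. But the middle step --- the induction on the number of edges labeled $\geq 3$ --- is a genuine gap, and it is precisely the part you defer. There is no long exact or Mayer--Vietoris sequence comparing $\gS_L$ with the Davis complex of the label-lowered system: changing $m_{st}$ replaces $W$ by a different Coxeter group $W'$, and the two Davis complexes are not subcomplexes of any common $W$-complex, nor is either obtained from the other by a decomposition into stable subcomplexes. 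Mayer--Vietoris in reduced $\ell^2$-homology, as used in Theorem \ref{t:main} via $B=B'\cup C_2B_v$, requires a cover of a single complex by subcomplexes invariant under the same group, because the terms must all be Hilbert modules over one group von Neumann algebra; your proposed ``correction terms'' would be modules over $\mathcal{N}(W)$ and $\mathcal{N}(W')$ simultaneously, so even the statement that $\gb_2(L)$ ``matches'' the Betti number of the lowered system is not well formed without substantial new input. (Continuity-in-labels phenomena do exist in the weighted $L^2$-cohomology of buildings, but Betti numbers jump as weights cross critical values, so no naive deformation argument closes this induction.)

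You should also know that the paper does not prove this statement at all: it imports it as Corollary 4.4 of \cite{schroedergeom}, and the proof there runs along entirely different lines. Rather than inducting on labels, one geometrizes the quotient: $K=\gS_L/W$ is a mirrored $3$-orbifold, which by Andreev's theorem and Thurston's geometrization of Haken $3$-orbifolds decomposes into geometric pieces, and the vanishing of reduced $\ell^2$-homology then follows from the Lott--L\"uck results \cite{LL} --- the same Haken-manifold route that, as Section \ref{s:intro} notes, independently yields the right-angled dimension-$3$ case of Davis--Okun \cite{do2}. So the right-angled statement you take as a base case is really the special case of the general geometric method, and the passage from labels $2$ to arbitrary labels is absorbed by geometrization rather than by any combinatorial induction; if you want a proof in the spirit you outline, you would need to invent the comparison machinery from scratch, and nothing in this paper or its cited sources supplies it.
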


We are now ready to prove our main theorem, analogous to Lemma \ref{l:k33sub}.    
\begin{Theorem}\label{t:main} Let $L$ be a metric flag triangulation of $\BS^2$, $A\subseteq L$ a full subcomplex with right-angled complement.  Then 
\[\gb_i(A)=0 \text{ for } i>1\]
\end{Theorem}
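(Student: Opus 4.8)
My plan is to induct on $N:=\Card(L^{0})-\Card(A^{0})$, the number of vertices of $L$ omitted by $A$. When $N=0$ we have $A=L$, the right-angled complement condition is vacuous, and $W_L\gS_A=\gS_L$; hence $\gb_i(A)=\dim_{W_L}\mfh_i(L)=0$ for \emph{all} $i$ by Theorem \ref{t:basecase}, which in particular gives the claim for $i>1$.

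For $N>0$, I would pick a vertex $v\in L^{0}-A^{0}$ and let $B$ be the full subcomplex of $L$ on $A^{0}\cup\{v\}$. Since every edge missing from $B$ is already missing from $A$, it carries the label $2$, so $B$ has right-angled complement and omits only $N-1$ vertices; the inductive hypothesis then gives $\gb_i(B)=0$ for $i>1$. The point of the right-angled complement is the following local structure: because $v\notin A$, every edge at $v$ lies outside $A$ and is labeled $2$, so $v$ commutes with each of its neighbors. Setting $B_v=\Lk(v,B)$ and $C=\St(v,B)=\{v\}\ast B_v$ (one checks $C$ is full in $L$), I get $B=A\cup C$ with $A\cap C=B_v$, and correspondingly the visual decomposition $W_B=W_A\ast_{W_{B_v}}W_C$ with $W_C=\langle v\rangle\times W_{B_v}$.

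This amalgam presents $W_L\gS_B$ as $W_L\gS_A\cup W_L\gS_C$ with intersection $W_L\gS_{B_v}$, and so produces a reduced $\ltwo$-homology Mayer--Vietoris sequence
\[\cdots\to\mfh_i(B_v)\to\mfh_i(A)\oplus\mfh_i(C)\to\mfh_i(B)\to\mfh_{i-1}(B_v)\to\cdots\]
on which von Neumann dimension is additive. To run the argument I would kill the flanking terms for $i>1$. First, $C$ is the right-angled cone $\{v\}\ast_2 B_v$, so equation \ref{e:rt-angledcone} gives $\gb_i(C)=\tfrac12\gb_i(B_v)$. Second, Lemma \ref{l:full} shows $B_v$ is a full subcomplex of $L$, hence (being contained in $L_v:=\Lk(v,L)$) a full subcomplex of the metric flag triangulation $L_v$ of $\BS^1$; Lemma \ref{l:2-dim} then yields $\gb_i(B_v)=0$ for $i>1$. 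Here I use that $\gb_i(B_v)$ is intrinsic to $B_v$ and independent of the ambient group, since $\dim_{W_L}(W_L\gS_{B_v})=\dim_{W_{B_v}}(\gS_{B_v})$. Thus for $i>1$ all of $\gb_i(B_v)$, $\gb_i(C)$, and $\gb_i(B)$ vanish, and additivity of dimension on the Mayer--Vietoris sequence forces $\gb_i(A)=0$, closing the induction.

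The step I expect to require the most care is the Mayer--Vietoris sequence itself: one must check that the amalgam $W_B=W_A\ast_{W_{B_v}}W_C$ genuinely decomposes $W_L\gS_B$ into the subcomplexes $W_L\gS_A$ and $W_L\gS_C$ of $\gS_L$ meeting along $W_L\gS_{B_v}$, and that the resulting reduced $\ltwo$-homology sequence---only weakly exact in general---still supports additivity of von Neumann dimension, so that vanishing of the outer terms forces vanishing in the middle. Conceptually, though, the right-angled complement hypothesis is doing all the work: it is exactly what makes $C$ split as the product $[-1,1]\times\gS_{B_v}$ (so the cone formula \ref{e:rt-angledcone} applies) and, together with Lemma \ref{l:full}, what places $B_v$ as a full subcomplex of the circle $L_v$ (so Lemma \ref{l:2-dim} applies). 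The remainder is bookkeeping that feeds the two vanishing inputs, Theorem \ref{t:basecase} and Lemma \ref{l:2-dim}, into the induction.
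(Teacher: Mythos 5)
Your proposal is correct and takes essentially the same route as the paper: the same induction on the number of omitted vertices, the same decomposition $B=A\cup C_2B_v$ meeting along $B_v$, and the same inputs (Theorem \ref{t:basecase}, Lemma \ref{l:full}, Lemma \ref{l:2-dim}, and the cone formula \ref{e:rt-angledcone}) fed into the Mayer--Vietoris sequence. The only differences are bookkeeping --- the paper phrases the induction as peeling vertices off $L$ down to $A$, while you add one vertex to $A$ and invoke the hypothesis on the larger complex --- and that you are more explicit about the weak exactness issue and the fullness of the star, points the paper leaves implicit.
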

\begin{proof} Let $B$ be a full subcomplex of $L$ such that $A\subseteq B\subseteq L$.  We induct on the number of vertices of $L-B$, the case $L=B$ given by Theorem \ref{t:basecase}.  Assume $\mfh_i(B)=0$ for $i>1$.   Let $v$ be a vertex of $B-A$ and set $B'=B-v$.  Then $B=B'\cup C_2B_v$ where $B_v$ (by Lemma \ref{l:full}) and $B'$ are full subcomplexes.  We have the following Mayer-Vietoris Sequence:
\[\ldots\rightarrow\mfh_i(B_v)\rightarrow\mfh_i(B')\oplus\mfh_i(C_2B_v)\rightarrow\mfh_i(B)\rightarrow\ldots\]
$B_v$ is a full subcomplex of $L_v$, the link of $v$ in $L$, a metric flag triangulation of $\BS^1$.  So Lemma \ref{l:2-dim} implies $\mfh_i(B_v)=0$, for $i>1$.  Thus, by equation \ref{e:rt-angledcone}, $\mfh_i(C_2B_v)=0$ for $i>1$.  It follows from exactness that $\mfh_i(B')=0$.  
\end{proof}

The above Theorem can be restated as follows, cf. \cite[Theorem 11.4.1]{do2}.
\begin{Theorem}\label{t:planar} Let $A$ be a metric flag complex of dimension $\leq 2$.  Suppose $A$ is planar (that is, it can be embedded as a subcomplex of the $2$-sphere).  Then 
\[\gb_2(A)=0.\]
\end{Theorem}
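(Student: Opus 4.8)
The plan is to deduce this from Theorem \ref{t:main} by exhibiting any planar metric flag complex $A$ of dimension $\le 2$ as a full subcomplex with right-angled complement of a metric flag triangulation $L$ of $\BS^2$. Granting such an $L$, Theorem \ref{t:main} immediately gives $\gb_i(A)=0$ for $i>1$, and in particular $\gb_2(A)=0$. This is exactly the analogue of how the planarity of $K_{3,3}$ is ruled out in Section \ref{par:k33} via Lemma \ref{l:k33sub}, so the whole content lies in constructing $L$, and that is where I would concentrate.

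First I would fix an embedding $|A|\hookrightarrow\BS^2$, which exists by hypothesis. The complement $\BS^2\setminus|A|$ is a disjoint union of open regions, and I would extend $A$ to a triangulation $L$ of $\BS^2$ by triangulating these regions, introducing new vertices \emph{only} in their interiors and never inserting an edge between two vertices of $A$. Every edge of $L$ not already in $A$ is then labeled $2$, which by definition gives $A$ a right-angled complement; the prohibition on new edges among vertices of $A$ is precisely what keeps $A$ a full subcomplex (for edges, and hence, through the metric flag property, for all simplices).

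The crux is to carry out this triangulation so that the labeled complex $L$ is metric flag. I would first record the favorable fact that the mixed-label triangles take care of themselves: if a triangle $\{r,s,t\}$ of $L$ contains an edge not in $A$, then fullness forces at least two of its edges to be complement edges, since otherwise the third edge would join two vertices of $A$ and so lie in $A$. Writing, say, $m_{rs}=m_{st}=2$, we get
\[
\frac{1}{m_{rs}}+\frac{1}{m_{st}}+\frac{1}{m_{rt}}=1+\frac{1}{m_{rt}}>1,
\]
so such a triangle is spherical. Consequently the metric flag condition reduces to two points: every triangle of $L$ is an honest simplex (automatic for a triangulation), and no \emph{empty} clique meeting the complement is spherical. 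Because a spherical clique of this dimension must be a face, this amounts to requiring that the triangulation of the complementary regions be \emph{flag away from} $A$, i.e. every clique not contained in $A$ spans a simplex. The cliques lying entirely in $A$ are non-spherical by the metric flag hypothesis on $A$ together with fullness, so they cause no trouble; this is how $L$ can be metric flag without being flag.

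The main obstacle, then, is constructing a triangulation of each complementary region that is flag relative to its bounding cycle in $A$, inserts no chord between boundary vertices, and leaves the edges of $A$ unsubdivided. For a region bounded by a simple cycle this can be arranged by adjoining interior vertices and a sufficiently fine, chord-free flag subdivision (coning from a single interior vertex already suffices for short boundary cycles). The genuinely delicate case is a region whose boundary walk in $A$ is not simple, which I would handle by a preliminary subdivision supported in the interior of the complementary regions before coning. Once $L$ is in hand, I would conclude by verifying the metric flag condition as above and invoking Theorem \ref{t:main}, recovering in particular the right-angled statement of Davis and Okun (Lemma \ref{l:k33sub}) as the special case in which all labels are $2$.
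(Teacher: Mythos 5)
Your proposal is correct and follows essentially the same route as the paper: embed $A$ in $\BS^2$, fill each complementary region with new interior vertices so that every edge not in $A$ is labeled $2$, observe that $A$ is then a full subcomplex with right-angled complement, and invoke Theorem \ref{t:main}. The extra care you devote to the metric flag condition is in fact warranted rather than superfluous, since the paper's bare one-vertex cone over each region can fail to be metric flag (for instance, when a region is bounded by a triangle already spanning a $2$-simplex of $A$ with spherical labels, coning creates a spherical empty $3$-simplex), and your finer, flag-away-from-$A$ subdivision is exactly the right repair.
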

\begin{proof} By Mayer-Vietoris, we may assume $A$ is connected.  Suppose $A$ is piecewise linearly embedded in $\BS^2$. By introducing a new vertex in the interior of each complementary region, and coning off the boundary of each region labeling each new edge with 2, we obtain a metric flag triangulation of $\BS^2$ in which every edge not in $A$ is labeled 2, that is, $A$ has a right-angled complement.  The result follows from the proof of Theorem \ref{t:main}.
\end{proof}

We are now ready to prove $K_5$ is not planar. 
\begin{Corollary}\label{t:k5notplanar} $K_5$ is not planar.
\end{Corollary}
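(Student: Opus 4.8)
The plan is to argue by contradiction, in direct analogy with the $K_{3,3}$ computation of the paragraph preceding this section (\ref{par:k33}): there the obstruction to planarity was the nonvanishing of $\gb_2$, and I will manufacture the same kind of contradiction, now against Theorem \ref{t:planar}. The first move is to equip $K_5$ with the \emph{correct} metric flag structure. Since labeling every edge $2$ does not realize $K_5$ as a subcomplex of a metric flag triangulation of $\BS^2$, I instead label every edge of $K_5$ with $3$. As observed just before the definition of right-angled complement, if $m_{rs}=m_{st}=m_{rt}=3$ then $\{r,s,t\}$ generates the infinite affine Coxeter group $\tilde A_2$ and hence is not spherical; so no triple of vertices spans a $2$-simplex and this labeled $K_5$ is a genuine metric flag complex of dimension $1$. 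Consequently the maximal spherical subsets have cardinality $2$, $\gS_{K_5}$ is $2$-dimensional, and Atiyah's formula \ref{e:atiyah} truncates at $\gb_2$.

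The heart of the argument is an orbihedral Euler characteristic computation. Using \ref{eqn:chiorb}, in the standard reduced form $\chi^{\text{orb}}=\sum_{T\in\cs}(-1)^{|T|}/|W_T|$, I enumerate the spherical subsets of this Coxeter system: the empty set, contributing $1$; the $5$ vertices, each with stabilizer of order $2$, contributing $-5/2$; and the $10$ edges, each with $m_{st}=3$ so that $W_{\{s,t\}}$ is dihedral of order $6$, contributing $10/6=5/3$. There are no spherical subsets of cardinality $\geq 3$. Hence I expect
\[\chi^{\text{orb}}(\gS_{K_5}/W_{K_5})=1-\frac{5}{2}+\frac{5}{3}=\frac{1}{6}.\]

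To finish I feed this into Atiyah's formula \ref{e:atiyah}. Because $W_{K_5}$ is infinite, the $0$-dimensional discussion above gives $\gb_0(K_5)=0$, so that $\gb_2(K_5)-\gb_1(K_5)=1/6$. The pleasant feature is that I never need to compute $\gb_1$: since $\gb_1(K_5)\geq 0$, this identity already forces $\gb_2(K_5)\geq 1/6>0$. Now if $K_5$ were planar, then, viewing the labeled $K_5$ as a metric flag complex of dimension $\leq 2$ embedded in $\BS^2$, Theorem \ref{t:planar} would yield $\gb_2(K_5)=0$, a contradiction. I expect the only genuinely delicate points to be bookkeeping ones rather than analytic: choosing the label $3$ (not $2$) so that $K_5$ is metric flag, verifying that no $2$-simplices arise so that $\gS_{K_5}$ is exactly $2$-dimensional and the Euler characteristic sum terminates at the edges, and noting that the edge labels are irrelevant to the purely topological planarity hypothesis of Theorem \ref{t:planar}.
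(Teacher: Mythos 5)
Your proposal is correct and follows essentially the same route as the paper: label the edges of $K_5$ with $3$'s to make it metric flag, compute $\chi^{\text{orb}}=1/6$, combine Atiyah's formula \ref{e:atiyah} with $\gb_0(K_5)=0$ to force $\gb_2(K_5)>0$, and contradict Theorem \ref{t:planar}. The only difference is that you spell out the Euler characteristic bookkeeping and the $\tilde A_2$ observation, which the paper leaves implicit.
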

\begin{proof} Label each edge of $K_5$ with 3, and thus $K_5$ is a metric flag complex.  In this case, $\chi^{\text{orb}}(K_5)=\frac{1}{6}$.  Then Atiyah's formula, equation \ref{e:atiyah}, and the fact that $\gb_0(K_5)=0$ imply that $\gb_2(K_5)>0$, contradicting Theorem \ref{t:planar}.
\end{proof}

\bibliography{mybib}

\end{document}